\numberwithin{equation}{section}\theoremstyle{remark}
\newcommand{\be}{\begin{eqnarray}}
\newcommand{\ee}{\end{eqnarray}}
\newcommand{\ce}{\begin{eqnarray*}}
\newcommand{\de}{\end{eqnarray*}}
\newtheorem{theorem}{Theorem}[section]
\newtheorem{lemma}[theorem]{Lemma}
\newtheorem{remark}[theorem]{Remark}
\newtheorem{definition}[theorem]{Definition}
\newtheorem{proposition}[theorem]{Proposition}
\newtheorem{Examples}[theorem]{Examples}
\newtheorem{corollary}[theorem]{Corollary}
\def\b{\beta}
\def\d{\delta}
\def\l{\lambda}
\def\[{{\Big[}}
\def\]{{\Big]}}
\def\<{{\langle}}
\def\>{{\rangle}}
\def\({{\Big(}}
\def\){{\Big)}}
\def\*{{\times}}
\def\bt{\begin{theorem}}
\def\et{\end{theorem}}
\def\bl{\begin{lemma}}
\def\el{\end{lemma}}
\def\br{\begin{remark}}
\def\er{\end{remark}}
\def\bx{\begin{Examples}}
\def\ex{\end{Examples}}
\def\bd{\begin{definition}}
\def\ed{\end{definition}}
\def\bp{\begin{proposition}}
\def\ep{\end{proposition}}
\def\bc{\begin{corollary}}
\def\ec{\end{corollary}}
\def\mB{{\mathbb B}}
\def\mE{{\mathbb E}}
\def\mN{{\mathbb N}}
\def\mP{{\mathbb P}}
\def\mR{{\mathbb R}}
\def\mW{{\mathbb W}}
\def\mX{{\mathbb X}}
\def\geq{\geqslant}
\def\leq{\leqslant}
\begin{document}

\title{A strong Markov process time-changed  by an inverse killed subordinator}

\dedicatory{$^{\dag}$School of Applied Mathematics, Beijing Normal University Zhuhai,\\
 Guangdong 519087, P.R. China\\
Email: huiyan.zhao@msn.com\\
$^{\dag\dag}$College of Science, North China University of Technology,\\
Beijing 100144, P.R. China\\
Email: xusiyan@ncut.edu.cn}
\author{$^{\dag}$Huiyan Zhao}
\author{$^{\dag\dag}$Siyan Xu}

\subjclass{}

\date{}

\begin{abstract}
In this paper, we consider a type of time-changed Markov process, where the time-change is an inverse killed subordinator. This can be seen as an extension of Chen (\textit{Chen, Z., Time fractional equations and probabilistic representation, Chaos Solitons and Fractals, 168-174, 2017}). As a result,  it
constructs a one-to-one correspondence between general Bernstein functions (with infinite L\'{e}vy measure) and a class of generalized time-fractional partial differential equations.
\end{abstract}
\keywords{Inverse killed subordinator; strong Markov process; Bernstein function; time-fractional PDE; }
\maketitle

\section{Introduction}

Let $X=(X_t)_{t\geq 0}$  be a strong Markov process on a separable locally compact Hausdorff space
$\mX$ whose transition semigroup $(T_t)_{t\geq0}$ is a uniformly bounded  and strongly continuous semigroup in some appropriate Banach space $(\mB, \|\cdot\|)$.  For example, $\mB$ can be chosen as $L^p(\mX;m)$ for some measure $m$ on $\mX$ and $p \geq 1$, we refer the reader to \cite{Applebaum,Chen} for more concrete examples. We shall denote by $\mathcal{L}$ the generator of semigroup $(T_t)_{t\geq0}$ and   by $D(\mathcal{L})$ the domain of $\mathcal{L}$.
Let $D=(D_t)_{t\geq 0}$ be a subordinator (i.e. a non-decreasing real-valued L\'{e}vy process) independent of $X$ with $D_0 =0$ and the Laplace exponent $\phi_0$:
                \begin{equation}\label{10_c_1}
                \phi_0(\l) =  k\l + \int_0^\infty \big{(}1- e^{-\l z} \big{)}\,\mu(dz),
                \end{equation}
such that
                \begin{equation}\label{10_c_2}
                  \mE (e^{-\l D_t}) = e^{-t \phi_0(\l) }, \quad \l>0,
                \end{equation}
where $k\geq 0$ and $\mu$ is a L\'{e}vy measure satisfying $\mu(-\infty, 0) =0$ and $\int_0^\infty (1\wedge z)\,\mu(dz) < \infty$.
Let $E=(E_t)_{t\geq 0}$ be the general inverse of $D$ defined as
                \begin{equation*}
                 E_t:= \inf\{s>0: D(s) >t\}, \quad t\geq0.
                \end{equation*}
 We shall call $E$ a time-change and the composite process $X_E=(X_{E_t})_{t\geq 0}$
a time-changed Markov process. 

Recent years, time-changed Markov processes have attracted many researchers due to their deep connections with the time-fractional Kolmogorov equations (or time-fractional Fokker-Planck equations), where the latter often appeared as an important tool to model complex anomalous diffusions, see, e.g. \cite{Hahn,Hahn3,Kobayashi,Meerschaert,Chen} and references therein.
For $f \in \mB$,  let
                \begin{equation}\label{10_solution_1}
                  v(t,x) := \mE [T_{E_t} f(x)] =\mE (f(X_{E_t})|X_0=x).
                \end{equation}
The following are some known relationships between time-changed Markov processes and  time-fractional partial differential equations.
\begin{itemize}
  \item[(i)]  Firstly, assume that $D$  is  a $\b$-stable subordinator ($0 < \b <1$). That is, $D$ is a special subordinator with Laplace exponent
                    \begin{equation*}
                      \phi_0(\l) = \l^\b.
                    \end{equation*}
Then, $v$ (defined as in (\ref{10_solution_1})) is the unique solution of the following time-fractional Cauchy problem
                \begin{equation*}
                  \partial_t^{\b} v = \mathcal{L} v,  \quad  v(0,x)=f(x),
                \end{equation*}
where $\partial_t^{\b}$ is the Caputo type fractional derivative of order $\b$ defined as
                \begin{equation*}
                  \partial_t^{\b} g(t) := \frac{1}{\Gamma(1-\b)} \frac{d}{dt} \int_0^t
                  (t-s)^{-\b}(g(s) - g(0))\,ds.
                \end{equation*}
In this situation, we refer the reader to \cite{Baeumer} for the case where the semigroup $(T_t)_{t\geq0}$ is generated by a L\'{e}vy process.

 \item[(ii)] Next, let $\nu$ be a finite measure on $(0,1)$ with $\nu(0,1)>0$. Assume that
 $D$ is a subordinator  with Laplace exponent
                    \begin{equation*}
                      \phi_0(\l) = \int_0^1\l^{\b} \,\nu(d\b).
                    \end{equation*}
 We note that such $D$ can be constructed by a weighted mixture of independent stable subordinators, see, e.g., \cite{Hahn}.
 Then, $v$ (defined as in (\ref{10_solution_1})) is the unique solution to the following abstract time-fractional Cauchy problem
                \begin{equation*}
                  \partial_t^{v} v = \mathcal{L} v,  \quad  v(0,x)=f(x),
                \end{equation*}
where $\partial_t^{\nu}$ is the distributed-order  derivative defined by
                \begin{equation*}
                  \partial_t^{\nu} g(t) := \int_0^1 \partial_t^{\b}g(t)\, \nu(d\b).
                \end{equation*}
An important application of distributed-order derivative is to model ultrslow diffusion, we refer the reader to \cite{Meerschaert1,Meerschaert2,Hahn} for the related topics.
 \item[(iii)] Recently, Chen in \cite{Chen} considered  a more general time-fractional derivative. To be clear, let $\mW$ be the set of
 functions $w:(0,\infty) \to [0,\infty)$, which are right continuous, unbounded,  non-increasing and locally integrable on $[0,\infty)$ such that
            \begin{equation*}
            \lim_{z \to \infty} w(z) =0
            \end{equation*}
 and
            \begin{equation*}
            \int_0^\infty (1\wedge z) (-dw(z)) < \infty.
            \end{equation*}
For a $w \in \mW$,  a generalized time-fractional derivative is defined for suitable $g$ as
                \begin{equation*}
                  \partial_t^{w} g(t) := \frac{d}{dt} \int_0^t w(t-s) (g(s) - g(0))\,ds.
                \end{equation*}
 It is shown  in \cite{Chen} that
 there exists a one-to-one correspondence between such generalized time-fractional derivatives and general driftless subordinators with infinite L\'{e}vy measure (i.e.,  a subordinator defined as in (\ref{10_c_1})-(\ref{10_c_2}) with $k=0$ and $\mu(0,\infty) = \infty$). Concretely speaking,
  for every $w \in \mW$, let $\mu$ be a measure on $(0, \infty)$ such that
                \begin{equation*}
                w(x) = \mu((x,\infty)).
                \end{equation*}
  It is clear that
                \begin{equation*}
                  \mu((0,\infty)) = \infty, \quad \text{and} \quad
                  \int_0^\infty (1\wedge z ) \mu(dz) < \infty.
                \end{equation*}
  Then, there exists a unique driftless subordinator (in the distributional sense) with  such infinite L\'{e}vy measure $\mu$. Conversely, given a driftless subordinator with  infinite L\'{e}vy measure $\mu$.
  One can define
                \begin{equation*}
                  w(x):= \mu((x,\infty))
                \end{equation*}
  such that $w \in \mW$.

  Next, assume that
  $D$ is general subordinator satisfying (\ref{10_c_1}) and (\ref{10_c_2}) with $k\geq 0$ and $\mu((0,\infty)) = \infty$. Under the framework of this generalized  time-fractional derivative, Chen in \cite{Chen} proved an interesting result, i.e., $v$ (defined as in (\ref{10_solution_1})) is the unique solution in $(\mB,\|\cdot\|)$ to the following time-fractional equation
                 \begin{equation}\label{10_equation}
                k \partial_t v  +  \partial_t^{w} v = \mathcal{L} v,  \quad  v(0,x)=f(x),
                \end{equation}
  in an appropriate sense.
\end{itemize}

We recall that, for a  given subordinator $D$, its Laplace exponent
  $\phi_0(\l)$ defined as in (\ref{10_c_1}) is a Bernstein function (see Section 2 for the definition) satisfying $\lim_{\l \to 0} \phi_0(\l) = 0$.  In other words,
  the results of Chen \cite{Chen} have constructed a one-to-one correspondence from the Bernstein function $\phi_0$
  with $\lim_{\l \to 0} \phi_0(\l) = 0$ to the time-fractional equation (\ref{10_equation}).
Now, given a more general Bernstein function $\phi(\l)$ with
                \begin{equation*}
                  \lim_{\l \to 0} \phi(\l) \neq 0.
                \end{equation*}
Does there exist a similar correspondence?

It's lucky that, for a given Bernstein function $\phi(\l)$,  there exists a unique killed subordiator $D^S$ (see, e.g., p.56 in \cite{Applebaum} or Section 2) whose Laplace exponent is exact $\phi(\l)$. Then, 
 let $E^S$ be the general inverse of $D^S$ and set
                \begin{equation*}
                 u(t,x):=\mE \big{(} T_{E_t^S} f(x) \big{)},
                \end{equation*}
 for $x\in \mX$, $f \in \mB$ and $t\geq 0$.
 Inspired by the work of Chen \cite{Chen}, we need to consider the question:
 what equation does $u(t,x)$ satisfy?
The main result is presented in Theorem 2.1 in Section 2, which can be seen as an extension of Chen \cite{Chen} to the killed subordinator case. As a result,  this
constructs a one-to-one correspondence between general Bernstein functions (with infinite L\'{e}vy measure) and a class of generalized time-fractional partial differential equations.
We shall follow the idea of Chen, with some crucial changes in the proof. As in
Chen \cite{Chen}, the proofs also work for uniformly bounded and continuous semigroups defined on some Banach spaces.

 The rest of this paper is organized as follows.
In Section 2, we present some basic concepts and our main result. The proofs are given in Section 3. Finally, an example is presented in Section 4.

\section{Preliminaries and the main result}

A function $\phi:(0,\infty) \to [0,\infty)$ is called a Bernstein function if
it is smooth and satisfies $(-1)^n \phi^{(n)}(\l) \leq 0$ for every $\l>0$ and $n \in \mN$.
It's known that every Bernstein function  $\phi$ admits a unique representation (see, e.g., Theorem 1.3.23 in \cite{Applebaum})
               \begin{equation*}
                \phi(\l) = a + k\l + \int_0^\infty \big{(}1- e^{-\l z} \big{)}\,\mu(dz),
                \end{equation*}
 where $a,\, k\geq 0$ and $\mu$ is a L\'{e}vy measure (i.e., a nonnegative Radon measure on $(0, \infty)$) with $\int_0^\infty (1\wedge z)\,\mu(dz) < \infty$. In the following, we shall call the triple $(a,k,\mu)$ as the characteristics of $\phi$.
Next, given a Bernstein function $\phi$ with the characteristics $(a,k,\mu)$, it's known that there exists a  killed subordinator $D^S$ defined as
                \begin{equation*}
                 D^S(t) := \left\{
                     \begin{array}{ll}
                       D_t, &  t < S, \\
                       \infty, &  t \geq S,
                     \end{array}
                   \right.
                \end{equation*}
such that
                \begin{equation*}
                  \mE(e^{-\l  D^S_t }) = e^{-t  \phi(\l)},
                \end{equation*}
where $D=(D_t)_{t\geq0}$ is a subordinator satisfying (\ref{10_c_1}) and (\ref{10_c_2}),
 $S$ is an exponentially distributed random variable independent of $D$ with the density function
 $g(z) = a e^{-az}$ for $z> 0$ (see, e.g., p.56 in \cite{Applebaum}).

Let $E^S$ be the general inverse of the killed subordinator $D^S$, that is,
                \begin{equation*}
                 E_t^S:= \inf\{s>0: D^S(s) >t\}, \quad t\geq0.
                \end{equation*}
The main result of this paper is the following.

\begin{theorem}\label{10_th_1}
Let $\phi$ be a Bernstein function with the characteristics $(a,k,\mu)$
and $D^S$ be the corresponding killed subordinator with its inverse $E^S$ defined as above.
Under this  setting, suppose that $a>0$ and $\mu((0,\infty)) = \infty$ and let $w(z):=\mu((z,\infty))$ for $z>0$. Then, for every $f \in D(\mathcal{L})$,  the function
                \begin{equation*}
                  u(t,x): = \mE \big{(} T_{E_t^S} f(x)\big{)}, \quad t\geq 0,
                \end{equation*}
is the unique solution  in $(\mB,\|\cdot\|)$ of the following time-fractional equation
                \begin{equation}\label{10_equation_1}
                 \left\{
                     \begin{array}{ll}
                       (k \partial_t + \partial_t^w) u(t,x) = (\mathcal{L} -a) u(t,x) +  a f(x), &  t>0, \\
                       u(0,x) = f(x),
                     \end{array}
                   \right.
                \end{equation}
in the  sense:
            \begin{enumerate}
              \item[(i)]  $\sup_{t\geq0} \|u(t, \cdot)\| < \infty $;
              \item[(ii)]  $x \to u(t,x)$ is in $D(\mathcal{L})$ for  each $t\geq 0$ with
              $\sup_{t\geq0} \|\mathcal{L}u(t, \cdot)\| < \infty $;
              \item[(iii)] both $t \to u(t,\cdot)$ and $t \to \mathcal{L}u(t, \cdot)$ are continuous in $(\mB, \|\cdot\|)$;
              \item[(iv)]  for every $t>0$,
                \begin{eqnarray*}
                   && \lim_{\d \to 0 } \frac{1}{\d} \Big{(}  k \big{(}u(t+\d,x) - u(t,x)\big{)} + \int_t^{t+\d} w(t-s) \big{(} u(s,x) - u(0,x) \big{)}\,ds\Big{)}\\
                   &=& \mathcal{L} u(t,x) - a u(t,x) + f(x) \quad \text{in $(\mB, \|\cdot\|)$.}
                \end{eqnarray*}
            \end{enumerate}
\end{theorem}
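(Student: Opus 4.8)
The plan is to pass to the Laplace transform in the time variable $t$, under which the generalized fractional equation (\ref{10_equation_1}) collapses into a resolvent identity and the killing is absorbed cleanly into the Bernstein exponent $\phi$. The decisive preliminary step is to understand the inverse killed subordinator through the pathwise representation $E_t^S=E_t\wedge S$, where $E_t=\inf\{s:D_s>t\}$ is the inverse of the un-killed subordinator $D$ and $S$ is the independent $\mathrm{Exp}(a)$ killing time. Conditioning on the path of $D$ and integrating out $S$ (which produces the split $\mE(g(E_t\wedge S)\mid D)=\int_0^{E_t}a e^{-ar}g(r)\,dr+e^{-aE_t}g(E_t)$), and then invoking the classical identity $\int_0^\infty e^{-\l t}P(E_t\le s)\,dt=\l^{-1}(1-e^{-s\phi_0(\l)})$, I would establish the crucial identity
\begin{equation*}
\int_0^\infty e^{-\l t}\,\mE\big(g(E_t^S)\big)\,dt=\frac{\phi(\l)}{\l}\int_0^\infty e^{-s\phi(\l)}g(s)\,ds .
\end{equation*}
The content is that, although $D^S$ is killed, its inverse relates to the \emph{full} exponent $\phi=a+\phi_0$ exactly as an ordinary inverse subordinator relates to $\phi_0$: the killing rate $a$ is precisely what permits $\lim_{\l\to0}\phi(\l)=a>0$ while keeping the subordination machinery intact. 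This is the ``crucial change'' relative to \cite{Chen}.

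Applying this to the $\mB$-valued map $s\mapsto T_sf$ (with expectations read as Bochner integrals) and recognising $\int_0^\infty e^{-s\phi(\l)}T_sf\,ds=(\phi(\l)-\mathcal{L})^{-1}f$ --- the resolvent of $\mathcal{L}$ at $\phi(\l)>0$, which exists by uniform boundedness of $(T_t)$ --- gives the closed form $\int_0^\infty e^{-\l t}u(t,\cdot)\,dt=\frac{\phi(\l)}{\l}(\phi(\l)-\mathcal{L})^{-1}f$. The regularity statements (i)--(iii) are then soft: $\|u(t,\cdot)\|\le M\|f\|$ from uniform boundedness; since $f\in D(\mathcal{L})$ and $\mathcal{L}T_r=T_r\mathcal{L}$, closedness of $\mathcal{L}$ lets me pull $\mathcal{L}$ inside the expectation, so $\mathcal{L}u(t,\cdot)=\mE(T_{E_t^S}\mathcal{L}f)$ and $\sup_t\|\mathcal{L}u(t,\cdot)\|\le M\|\mathcal{L}f\|$; continuity in $t$ of $u$ and of $\mathcal{L}u$ follows from strong continuity of $(T_t)$, the almost sure continuity of $t\mapsto E_t^S$ (here $\mu(0,\infty)=\infty$ forces $D$ strictly increasing, hence $E$ continuous) and dominated convergence.

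For the equation itself I would first record, via $w(z)=\mu((z,\infty))$ and Fubini, the Laplace identity $\phi(\l)=a+k\l+\l\widetilde w(\l)$ with $\widetilde w(\l)=\int_0^\infty e^{-\l z}w(z)\,dz$, equivalently $k+\widetilde w(\l)=(\phi(\l)-a)/\l$. Writing $q(t,\cdot):=k\big(u(t,\cdot)-f\big)+\int_0^t w(t-s)\big(u(s,\cdot)-f\big)\,ds$, a short computation from the closed form of the transform yields $\int_0^\infty e^{-\l t}q(t,\cdot)\,dt=\l^{-1}\int_0^\infty e^{-\l t}\big(\mathcal{L}u-au+af\big)\,dt$; since both $q$ and $t\mapsto\int_0^t(\mathcal{L}u-au+af)\,dr$ are continuous with value $0$ at $t=0$, injectivity of the Laplace transform identifies them, and differentiating $q$ (legitimate by (iii)) gives $q'(t,\cdot)=\mathcal{L}u-au+af$, i.e.\ the right-hand side of (\ref{10_equation_1}), which is the assertion (iv). For uniqueness, if $u_1,u_2$ both satisfy (i)--(iv) then $U:=u_1-u_2$ has a well-defined transform by (i)--(iii), the homogeneous version of the same computation forces $(\phi(\l)-\mathcal{L})\widetilde U(\l)=0$, and since $\phi(\l)>0$ lies in the resolvent set of $\mathcal{L}$ we obtain $\widetilde U\equiv0$, whence $U\equiv0$ by injectivity and continuity.

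The main obstacle I anticipate is the crucial identity of the first paragraph together with the repeated interchange of the closed unbounded operator $\mathcal{L}$ with the time-Laplace transform and the Bochner expectation in the later steps. A secondary difficulty is the singular kernel $w$ (with $w(0+)=\infty$ since $\mu$ is infinite), which must be controlled through $\int_0^\infty(1\wedge z)\,\mu(dz)<\infty$ when one justifies that $q$ is genuinely differentiable rather than merely absolutely continuous --- this is precisely where the continuity assertions (iii) carry the argument.
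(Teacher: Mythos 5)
Your proposal is correct, and for the existence part it takes a genuinely different route from the paper. The paper works entirely in the time domain: it relies on Chen's lemma for the law of $\bar{D}_r$, the explicit law $\mP(E_s^S \leq r) = 1 - e^{-ar}\big(1-\mP(D_r \geq s)\big)$, and the primitive $G(x)=\int_0^x w(s)\,ds$, and then computes $\int_0^t \mathcal{L}u(s,x)\,ds$ directly, splitting it into three pieces identified respectively with the fractional term $\int_0^t w(t-s)\big(u(s,x)-u(0,x)\big)\,ds$, the drift term $k\big(u(t,x)-u(0,x)\big)$, and the killing term $a\big(\int_0^t u(s,x)\,ds - tf(x)\big)$, before differentiating via continuity. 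You instead work in the frequency domain: your crucial identity is sound (the conditional split $\mE\big(g(E_t\wedge S)\mid D\big)=\int_0^{E_t}ae^{-ar}g(r)\,dr+e^{-aE_t}g(E_t)$, the identification $\{r<E_t\}=\{D_r\leq t\}$ up to the set $\{D_r=t\}$ which is Lebesgue-null in $t$, and the classical transform of $\mP(E_t\leq s)$ combine exactly as claimed), and it yields the closed form $\int_0^\infty e^{-\l t}u(t,\cdot)\,dt=\frac{\phi(\l)}{\l}\big(\phi(\l)-\mathcal{L}\big)^{-1}f$, after which the equation follows by resolvent algebra, injectivity of the vector-valued Laplace transform, and differentiation of $q(t)=\int_0^t\big(\mathcal{L}u-au+af\big)(s)\,ds$. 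Your regularity steps (i)--(iii) and your uniqueness argument coincide with the paper's (the paper also uses the Laplace transform and the existence of the resolvent at $\phi(\l)>0$ for uniqueness). What your route buys: it bypasses Chen's lemma and the $G$-bookkeeping, and the explicit transform formula for $u$ makes the correspondence between $\phi$ and the equation transparent; what it costs: you must carefully justify the crucial identity and explicitly invoke uniqueness of Laplace transforms of $\mB$-valued continuous functions (standard via Hahn--Banach reduction to the scalar case, but it should be cited). Note finally that condition (iv) as printed in the theorem (with $w(t-s)$ evaluated on $s\in[t,t+\d]$ and $f$ rather than $af$) contains typos; what you prove, namely $q'(t)=\mathcal{L}u(t,\cdot)-au(t,\cdot)+af$, is the intended statement and is also what the paper's own proof establishes.
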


\begin{remark}
The followings are two remarks for our main result.
\begin{enumerate}
  \item [(i)] A killed subordinator can be seen as an extension of a general subordinator.
  Indeed, if the exponential parameter $a$ of $S$ degenerates to 0, then the related killed subordinator $D^S$ will become an ordinary subordinator. Hence, in this sense, the result of Chen \cite{Chen} (see, e.g., equation \ref{10_equation}) can be covered by Theorem \ref{10_th_1} as a special case.
  \item [(ii)] As we can see, there exists a one-to-one correspondence between the Bernstein function $\phi$  and equation (\ref{10_equation_1}). This reveals a kind of relationship between general Bernstein functions (with infinite L\'{e}vy measure) and generalized time-fractional partial differential equations.
\end{enumerate}

\end{remark}

\section{Proof of the main result}
In this section, we will prove the main result by following the method of Chen \cite{Chen}. Firstly, it's well known that, for $f \in D(\mathcal{L})$,
            \begin{equation}\label{10_a_2}
              \frac{d}{dt} T_t f(x) = \mathcal{L}T_t f = T_t \mathcal{L} f
            \end{equation}
in $(\mB, \|\cdot\|)$.
Throughout this section, $w$ and $u$ will denote the functions defined as in  Theorem \ref{10_th_1}.
Next, recall that $D$ is a subordinator satisfying (\ref{10_c_1}) and (\ref{10_c_2}).  By using the L\'{e}vy-It\^{o} decomposition, we have
            \begin{equation}\label{10_a_1}
              D_t = kt + \bar{D}_t, \quad t \geq 0,
            \end{equation}
where $\bar{D}=(\bar{D}_t)_{t\geq 0}$ is a driftless subordinator with
            \begin{equation*}
              \mE(e^{-\l \bar{D}_t }) = e^{- t \int_0^\infty \big{(}1- e^{-\l z} \big{)}\,\mu(dz)}, \quad \l>0.
            \end{equation*}
From (\ref{10_a_1}), we have
            \begin{equation}\label{10_10}
            \bar{D}_t = D_t - kt.
            \end{equation}

The following result is taken from Chen \cite{Chen}.

\begin{lemma}\label{10_lemma_1}
There exists a Borel null set $ A \subset (0,\infty)$ such that for every
$r \in (0,\infty)$ and $ t \in (0,\infty)\setminus A$,
        \begin{equation*}
          \mP \big{(} \bar{D}_r  \geq t \big{)} =
          \int_0^r \mE \big{[}w(t-\bar{D}_y) 1_{\{t > \bar{D}_y\}}\big{]}\,dy
        \end{equation*}
and
        \begin{equation*}
          \mP \big{(} \bar{D}_r = t \big{)} = 0.
        \end{equation*}
\end{lemma}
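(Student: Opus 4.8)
The plan is to recover the first identity from the compensation formula (L\'evy system) for the jumps of the driftless subordinator $\bar D$, pushing all the measure-theoretic subtleties into the exceptional set $A$. Since $\bar D$ is driftless with $\mu((0,\infty))=\infty$, it is pure jump and strictly increasing, and its jumps form a Poisson point process on $(0,\infty)^2$ with intensity $dy\,\mu(dz)$. Writing $\sigma_t:=\inf\{s>0:\bar D_s\geq t\}$ for the first-passage time of the level $t$, the first step is the pathwise identity
\begin{equation*}
\sum_{0<y\leq r}\mathbf{1}_{\{\bar D_{y-}<t\leq \bar D_y\}}=\mathbf{1}_{\{\bar D_r\geq t,\ \bar D_{\sigma_t-}<t\}},
\end{equation*}
which holds because a strictly increasing path meets the level $t$ at most once, and on the event $\{\bar D_{\sigma_t-}=t\}$ (where the path reaches $t$ as a left limit) no single jump carries it from strictly below $t$ to at or above $t$.

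Applying the compensation formula to the left-hand side with the predictable integrand $H(y,z)=\mathbf{1}_{\{\bar D_{y-}<t\}}\mathbf{1}_{\{\bar D_{y-}+z\geq t\}}$ gives
\begin{equation*}
\mP\big(\bar D_r\geq t,\ \bar D_{\sigma_t-}<t\big)=\mE\int_0^r \mu\big([t-\bar D_{y-},\infty)\big)\,\mathbf{1}_{\{\bar D_{y-}<t\}}\,dy.
\end{equation*}
Next I would split $\mu([t-x,\infty))=w(t-x)+\mu(\{t-x\})$ and discard the atomic part: since $\bar D$ is strictly increasing, for each fixed level $\ell$ the occupation time $\int_0^r\mathbf{1}_{\{\bar D_{y-}=\ell\}}\,dy$ vanishes, so summing over the (countably many) atoms of $\mu$ shows $\mE\int_0^r\mu(\{t-\bar D_{y-}\})\mathbf{1}_{\{\bar D_{y-}<t\}}\,dy=0$ for every $t$. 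The same fact lets me replace $\bar D_{y-}$ by $\bar D_y$ and $\{\bar D_{y-}<t\}$ by $\{\bar D_y<t\}$, as these alterations change the integrand only on the Lebesgue-null (in $y$) set of jump times. This reduces the right-hand side exactly to $\int_0^r\mE[w(t-\bar D_y)\mathbf{1}_{\{t>\bar D_y\}}]\,dy$, the claimed expression.

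It remains to remove the restriction $\{\bar D_{\sigma_t-}<t\}$, i.e. to show $\mP(\bar D_r\geq t,\ \bar D_{\sigma_t-}=t)=0$, and simultaneously to obtain $\mP(\bar D_r=t)=0$, both uniformly in $r$. I expect this to be the main obstacle, since it is exactly the no-creeping phenomenon for driftless subordinators. Both bad events are contained in $\{t\in\overline{\cR}\}$, where $\cR$ is the range of $\bar D$, and this containing event does not depend on $r$. The key input is the classical fact that a driftless subordinator has range of zero Lebesgue measure, so that $\int_0^\infty\mathbf{1}_{\{t\in\overline{\cR}\}}\,dt=0$ almost surely; Fubini then yields $\mP(t\in\overline{\cR})=0$ for Lebesgue-almost every $t$. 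Defining $A:=\{t>0:\mP(t\in\overline{\cR})>0\}$ gives a single Borel null set, and for $t\notin A$ we have $\mP(\bar D_{\sigma_t-}=t)=\mP(\bar D_r=t)=0$ for every $r$, which closes both assertions. As an independent check one can verify the first identity at the level of Laplace transforms in $t$: both sides transform to $(1-e^{-r\Phi(\lambda)})/\lambda$ with $\Phi(\lambda)=\int_0^\infty(1-e^{-\lambda z})\mu(dz)$, using the elementary relation $\int_0^\infty e^{-\lambda s}w(s)\,ds=\Phi(\lambda)/\lambda$.
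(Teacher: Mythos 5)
Your proof is correct, and it takes a genuinely different route from the paper's. The paper in fact gives no proof of this lemma at all: it is quoted verbatim from Chen's paper (Lemma 2.1 there), where it is established by comparing Laplace transforms in $t$ --- essentially the computation you relegate to an ``independent check'' at the end: both sides transform to $(1-e^{-r\Phi(\lambda)})/\lambda$, so for each fixed $r$ they agree for a.e.\ $t$, and one must then do extra work (countably many $r$ plus right-continuity in $r$) to make the exceptional null set uniform in $r$. Your argument is instead direct and probabilistic: the pathwise identification of $\{\bar{D}_r\geq t\}$, up to the creeping/hitting event, with a single jump-crossing; the compensation formula for the Poisson point process of jumps; and the classical fact that the closed range $\overline{\cR}$ of a driftless subordinator is Lebesgue-null. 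This buys two things the transform proof does not: the first identity holds for \emph{every} $t>0$ once the left side is restricted to jump-crossings (the null set is needed only to discard $\{t\in\overline{\cR}\}$), and the exceptional set is exhibited explicitly as $A=\{t>0:\mP(t\in\overline{\cR})>0\}$, which is manifestly independent of $r$, so no separate uniformization step is required; it also makes transparent why both assertions of the lemma share the same null set. Two small points you should spell out, though both are routine: joint measurability of $(t,\omega)\mapsto 1_{\{t\in\overline{\cR}(\omega)\}}$ (e.g.\ via $\inf_{q\in\mQ,\,q>0}|t-\bar{D}_q(\omega)|$), which is what legitimizes Fubini and makes $A$ Borel; and the bookkeeping when discarding the atomic part of $\mu$ (countably many atoms, each contributing an at most singleton set of $y$'s by strict monotonicity of $y\mapsto\bar{D}_{y-}$, so Tonelli gives zero). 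With those remarks added, your proof is a complete, self-contained substitute for the citation.
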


\begin{remark}\label{10_re_1} \

\begin{itemize}
  \item[(i)] The assumption $\mu(0,\infty) = \infty$ is to make sure $\bar{D}$ is strictly increasing almost surely. Hence $D$ is also strictly increasing almost surely, as a consequence, $E$ and $E^S$ are continuous almost surely. This assumption is indispensable for Lemma \ref{10_lemma_1}, we refer the reader to Lemma 2.1 in \cite{Chen} for more details.
  \item[(ii)] According to Lemma \ref{10_lemma_1} and equation (\ref{10_10}), there exists a Borel null set $B \subset (0, \infty)$ such that, for every
$r \in (0,\infty)$ and $ t \in (0,\infty)\setminus B$, we  have
         \begin{equation*}
          \mP \big{(} D_r = t \big{)} = 0.
        \end{equation*}
\end{itemize}

\end{remark}
Next, recall that
        \begin{equation*}
          E_t =  \inf\{s>0:  D_s > t\}.
        \end{equation*}
By the definition of $E^S$, it's easy to see that
        \begin{equation}\label{10_e_1}
          E_t^S = E_t  \wedge S.
        \end{equation}
We need the following representation for the distribution of $E^S_s$ for $s\geq 0$.

\begin{lemma}
For every $s,\, r \geq 0$, we have
        \begin{equation}\label{10_1}
          \mP(E_s^S \leq r) =
          1 - e^{-ar} \big{(}1-\mP(D_r  \geq s ) \big{)}.
        \end{equation}
Moreover,  we also have
        \begin{equation}\label{10_9}
         \mP(E_s^S \leq r) = 1 - e^{-ar} \big{(}1-\mP(\bar{D}_r  \geq s-kr) \big{)}.
        \end{equation}
\end{lemma}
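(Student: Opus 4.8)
The plan is to reduce the law of $E_s^S$ to that of the inverse subordinator $E_s$ and the exponential clock $S$ separately, using the representation $E_s^S = E_s \wedge S$ supplied by (\ref{10_e_1}). Since $E_s = \inf\{u>0 : D_u > s\}$ is a measurable functional of the path of $D$, it is independent of $S$, and because a minimum exceeds $r$ exactly when both arguments do, I would first write
\begin{equation*}
\mP(E_s^S > r) = \mP(E_s > r,\, S > r) = \mP(E_s > r)\,\mP(S > r).
\end{equation*}
The exponential factor is immediate from the density $g(z)=ae^{-az}$, giving $\mP(S>r)=e^{-ar}$. It then remains only to identify $\mP(E_s > r)$ with $1-\mP(D_r \geq s)$, after which (\ref{10_1}) follows by passing to complements.

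The heart of the matter, and the step I expect to demand the most care, is the duality between the first-passage time $E_s$ and the level $D_r$. Using that $D$ is non-decreasing and right-continuous, I would establish the two-sided inclusion
\begin{equation*}
\{D_r > s\} \subseteq \{E_s \leq r\} \subseteq \{D_r \geq s\}.
\end{equation*}
The left inclusion is clear, since $D_r > s$ places $u=r$ in the set $\{u : D_u > s\}$ and hence forces $E_s \leq r$; the right inclusion follows by taking times $u_n \leq r$ with $D_{u_n} > s$ approaching the infimum and letting $n\to\infty$, right-continuity then yielding $D_r \geq s$. These two events differ only on $\{D_r = s\}$. This is exactly where the standing hypothesis is used decisively: because $\mu((0,\infty)) = \infty$, the driftless part $\bar D$ (and hence $D$) has a non-atomic one-dimensional law, so $\mP(D_r = s) = 0$, as recorded in Remark \ref{10_re_1}(ii). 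Consequently $\mP(E_s \leq r) = \mP(D_r \geq s)$, whence $\mP(E_s > r) = 1 - \mP(D_r \geq s)$, and combining this with the display above gives $\mP(E_s^S > r) = e^{-ar}\big(1-\mP(D_r \geq s)\big)$, which is (\ref{10_1}).

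Finally, to obtain (\ref{10_9}) I would feed the L\'{e}vy--It\^{o} splitting (\ref{10_a_1}), namely $D_r = kr + \bar D_r$, into the event on the right-hand side: this gives $\{D_r \geq s\} = \{\bar D_r \geq s-kr\}$ and therefore $\mP(D_r \geq s) = \mP(\bar D_r \geq s-kr)$, and substituting into (\ref{10_1}) produces (\ref{10_9}) at once. The only genuinely delicate point in the argument is the boundary case $\{D_r = s\}$ in the duality step, where the infinite-L\'{e}vy-measure assumption (through the non-atomicity of $D_r$) is precisely what is required; should one prefer to rely only on the a.e.\ statement of Remark \ref{10_re_1}(ii), the conclusion for every $s$ can still be recovered from the exceptional-set version by monotonicity and right-continuity of $s \mapsto E_s$.
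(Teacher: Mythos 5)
Your proof is correct and follows essentially the same route as the paper's: the representation $E_s^S = E_s \wedge S$, independence of $D$ and $S$, the exponential tail $\mP(S>r)=e^{-ar}$, the passage-time duality $\mP(E_s \leq r) = \mP(D_r \geq s)$, and finally $\bar D_r = D_r - kr$ for (\ref{10_9}). The only difference is that you spell out the duality step (the two-sided inclusion $\{D_r > s\} \subseteq \{E_s \leq r\} \subseteq \{D_r \geq s\}$ and the negligibility of $\{D_r = s\}$), which the paper asserts in one line without justification — a welcome addition rather than a departure.
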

\begin{proof}
Note that $D$ and $S$ are independent. Then,
according to (\ref{10_e_1}), we have
            \begin{eqnarray*}
          \mP(E_s^S \leq r)
          &=&  1 - \mP(E_s \wedge  S > r) \nonumber  \\
          &=&  1 - \mP(E_s  > r) \mP( S > r) \nonumber  \\
          &=&  1 - e^{-ar} \big{(}1-\mP(E_s  \leq r) \big{)} \nonumber  \\
          &=&  1 - e^{-ar} \big{(}1-\mP(D_r  \geq s ) \big{)},
        \end{eqnarray*}
which implies (\ref{10_1}). Next, (\ref{10_9}) follows by using (\ref{10_10}). We finish the proof.
\end{proof}

Similarly as in \cite{Chen}, let us define
$G(0)=0$ and
        \begin{equation*}
          G(x) = \int_0^x w(t)\,dt
        \end{equation*}
for $x>0$. Then, $G(x)$ is a continuous function on $[0,\infty)$ with $G'(x) = w(x)$ on $(0,\infty)$. It follows that, by using (\ref{10_1}), Remark \ref{10_re_1} (ii) and  the integration by parts formula, for every $t, \, r\geq 0$,
        \begin{eqnarray}\label{10_2}
           && \int_0^t w(t-s) \mP(E_s^S \leq r)\,ds \nonumber \\ \nonumber
           &=& -\int_0^t  \mP(E_s^S \leq r)\,dG(t-s) \\ \nonumber
           &=& G(t) + \int_0^t  G(t-s) \,d_s \mP(E_s^S \leq r) \\ \nonumber
           &=& G(t) - e^{-ar}\int_0^t  G(t-s) \,d_s \mP(D_r  \leq  s )  \\
             &=& G(t) - e^{-ar} \mE [G(t-D_r) 1_{\{t\geq D_r\}}].
        \end{eqnarray}

We also need  the following result.
\begin{lemma} For the $G$ defined above, we have
        \begin{eqnarray}\label{10_0}
           &&  \int_0^t w(t-s) \big{(} u(s,x) - u(0,x) \big{)}\,ds \nonumber\\
           &=&  \int_0^\infty
           e^{-ar} \mE [G(t-D_r) 1_{\{t > D_r\}}] \mathcal{L} T_rf(x) \,dr.
        \end{eqnarray}
\end{lemma}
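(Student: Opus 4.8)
The plan is to expand the increment $u(s,x)-u(0,x)$ as an integral of $\mathcal L T_r f$ against the law of $E_s^S$, to interchange the resulting $s$- and $r$-integrations by Fubini's theorem, and then to recognize the inner $s$-integral as precisely the quantity already evaluated in (\ref{10_2}). First I would note that $u(0,x)=f(x)$: since $\mu((0,\infty))=\infty$, the subordinator $D$ is strictly increasing from the origin (this is part of Remark \ref{10_re_1}(i)), so $E_0=0$ and hence $E_0^S=E_0\wedge S=0$ by (\ref{10_e_1}), giving $u(0,x)=T_0 f(x)=f(x)$. Because $f\in D(\mathcal L)$, the differentiation formula (\ref{10_a_2}) yields, for each fixed realization of $E_s^S$,
\begin{equation*}
T_{E_s^S}f(x)-f(x)=\int_0^{E_s^S}\mathcal L T_r f(x)\,dr .
\end{equation*}
Taking expectations and applying Tonelli's theorem (for Bochner integrals) then gives the representation
\begin{equation*}
u(s,x)-u(0,x)=\int_0^\infty \mathcal L T_r f(x)\,\mP(E_s^S>r)\,dr .
\end{equation*}

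Next I would substitute this representation into the left-hand side of (\ref{10_0}) and exchange the order of integration:
\begin{equation*}
\int_0^t w(t-s)\big(u(s,x)-u(0,x)\big)\,ds=\int_0^\infty \mathcal L T_r f(x)\left(\int_0^t w(t-s)\,\mP(E_s^S>r)\,ds\right)dr .
\end{equation*}
Writing $\mP(E_s^S>r)=1-\mP(E_s^S\leq r)$, using the elementary identity $\int_0^t w(t-s)\,ds=G(t)$, and invoking (\ref{10_2}), the inner $s$-integral collapses to
\begin{equation*}
\int_0^t w(t-s)\,\mP(E_s^S>r)\,ds=G(t)-\big(G(t)-e^{-ar}\mE[G(t-D_r)1_{\{t\geq D_r\}}]\big)=e^{-ar}\mE[G(t-D_r)1_{\{t\geq D_r\}}].
\end{equation*}
Since $G(0)=0$, the event $\{D_r=t\}$ contributes nothing to this expectation, so the indicator $1_{\{t\geq D_r\}}$ may be replaced by $1_{\{t>D_r\}}$ (consistently with Remark \ref{10_re_1}(ii)); substituting back produces exactly the right-hand side of (\ref{10_0}). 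This is the method of Chen \cite{Chen}, adapted to the killed inverse.

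The step I expect to be the main obstacle is the rigorous justification of the two interchanges above and, in particular, of pulling the (generally unbounded) generator $\mathcal L$ inside the integral; both rest on a single uniform estimate. Since the semigroup is uniformly bounded we may set $M:=\sup_{r\geq0}\|T_r\|<\infty$, and since $f\in D(\mathcal L)$ we have $\mathcal L T_r f=T_r\mathcal L f$, whence $\|\mathcal L T_r f\|\leq M\|\mathcal L f\|$ for all $r\geq0$. Moreover, by (\ref{10_e_1}), $E_s^S=E_s\wedge S\leq S$ with $S$ exponentially distributed with parameter $a>0$, so that
\begin{equation*}
\int_0^\infty \mP(E_s^S>r)\,dr=\mE(E_s^S)\leq \mE(S)=\frac1a<\infty ;
\end{equation*}
it is exactly here that the standing hypothesis $a>0$ (the killing) is essential. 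Combining the two bounds gives
\begin{equation*}
\int_0^t w(t-s)\int_0^\infty \|\mathcal L T_r f\|\,\mP(E_s^S>r)\,dr\,ds\leq \frac{M\|\mathcal L f\|}{a}\,G(t)<\infty ,
\end{equation*}
which, together with the continuity (hence finiteness) of $G$, makes the integrand absolutely Bochner integrable and thereby legitimizes both applications of Tonelli/Fubini as well as the exchange of $\mathcal L$ with the Bochner integral.
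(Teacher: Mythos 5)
Your proof is correct, and it reaches (\ref{10_0}) by a genuinely different route than the paper, although both rest on the same key computation (\ref{10_2}). The paper expands the increment as a Stieltjes integral, $u(s,x)-u(0,x)=\int_0^\infty \big(T_rf(x)-f(x)\big)\,d_r\mP(E_s^S\leq r)$, applies Fubini, inserts (\ref{10_2}), and only then produces $\mathcal{L}T_rf$ by an integration by parts in $r$ (using $d_r T_rf=\mathcal{L}T_rf\,dr$, with boundary terms vanishing because $T_0f-f=0$ at $r=0$ and $e^{-ar}\to 0$ at $r=\infty$). You instead perform that integration by parts pathwise at the outset, via the fundamental theorem of calculus $T_{E_s^S}f(x)-f(x)=\int_0^{E_s^S}\mathcal{L}T_rf(x)\,dr$, and Tonelli then turns this into an integral of $\mathcal{L}T_rf$ against the survival function $\mP(E_s^S>r)$; after one more Fubini, (\ref{10_2}) closes the computation. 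What your route buys: the Stieltjes integration by parts, with its implicit boundary terms, is replaced by a pathwise identity plus nonnegative-integrand arguments, and your explicit bound $\int_0^\infty\mP(E_s^S>r)\,dr=\mE(E_s^S)\leq\mE(S)=1/a$ makes transparent exactly where the killing $a>0$ enters --- a point the paper leaves implicit (there it is what kills the boundary term at $r=\infty$). Your remark that $G(0)=0$ also lets you replace $1_{\{t\geq D_r\}}$ by $1_{\{t>D_r\}}$ directly, without appealing to the null-set statement of Remark \ref{10_re_1}(ii) as the paper does. The paper's version has the advantage of staying entirely within the distributional calculus already set up in (\ref{10_2}), while yours is more self-contained in its analytic justifications.
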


\begin{proof}
By using the Fubini's theorem, we have
        \begin{eqnarray*}
           &&  \int_0^t w(t-s) \big{(} u(s,x) - u(0,x) \big{)}\,ds \\
           &=& \int_0^t w(t-s) \Big{(} \int_0^\infty \big{(}T_rf(x) - f(x)\big{)}\,d_r\mP(E_s^S \leq r)\Big{)}\,ds \\
           &=&  \int_0^\infty \big{(}T_rf(x) - f(x)\big{)}\,d_r\Big{(}
           \int_0^t w(t-s) \mP(E_s^S \leq r)\,ds\Big{)}.
        \end{eqnarray*}
By using (\ref{10_2}) and the integration by parts formula again, we get
        \begin{eqnarray*}
           &&  \int_0^t w(t-s) \big{(} u(s,x) - u(0,x) \big{)}\,ds \nonumber\\
           &=&  -\int_0^\infty \big{(}T_rf(x) - f(x)\big{)}\,d_r\Big{(}
           e^{-ar} \mE [G(t-D_r) 1_{\{t\geq D_r\}}]\Big{)} \nonumber \\
           &=&  \int_0^\infty
           e^{-ar} \mE [G(t-D_r) 1_{\{t\geq D_r\}}] \mathcal{L} T_rf(x) \,dr \nonumber \\
           &=&  \int_0^\infty
           e^{-ar} \mE [G(t-D_r) 1_{\{t > D_r\}}] \mathcal{L} T_rf(x) \,dr.
        \end{eqnarray*}
We finish the proof.
\end{proof}

\vspace{1mm}

Now, we are in a position to give the proof of our main result.

\vspace{1mm}
\noindent\textbf{\textit{Proof of Theorem \ref{10_th_1}}.} We will prove the existence firstly, and then prove the uniqueness.

(i)  (Boundedness an continuity) Firstly, recall that
                \begin{equation*}
                  u(t,x)= \mE \big{(} T_{E_t^S} f(x)\big{)}
                  =\int_0^\infty T_{r} f(x)\, d_r \big{(}\mP (E_t^S \leq r)\big{)}.
                \end{equation*}
Then, for $f \in D(\mathcal{L})$, we have
                \begin{eqnarray}\label{10_e_2}
                   &&  \sup_{t\geq0} \|u(t, \cdot)\|
                   = \sup_{t\geq0} \|\int_0^\infty T_{r} f(\cdot)\, d_r \mP\big{(} E_t^S \leq r\big{)}\| \nonumber \\
                   &\leq & \sup_{r\geq0} \|T_{r} f(\cdot) \|
                    \leq M \|f\| < \infty,
                \end{eqnarray}
where $M$ is a bound of $(T_t)_{t\geq 0}$.

In the following, we aim to prove, for fixed $t\geq 0$,
            \begin{equation}\label{10_s}
              u(t,\cdot) \in D(\mathcal{L}) \quad \text{and} \quad
               \mathcal{L} u(t,x) = \mE \big{(} T_{E_t^S} \mathcal{L}f(x)\big{)}.
            \end{equation}
For this, let $t\geq 0$ be fixed,  $F_t(r)$ be the distribution function of $E^S_t$  and  $(F_t^n(r))_{n\geq 1}$ be a sequence of simple increasing  and right continuous functions such that
                \begin{equation*}
                  \lim_{n \to \infty } \sup_{r \geq 0}|F_t^n(r) - F_t(r)|=0
                \end{equation*}
and
                \begin{equation*}
                    F_{n,t}(r)  \leq F_t(r), \quad \text{for all $r\geq 0$.}
                \end{equation*}
Let
            \begin{equation*}
                  u_n(t,x)=  \int_0^\infty T_{r} f(x)\, dF_t^n(r).
            \end{equation*}
It follows that,
            \begin{eqnarray*}
               &&  \|u_n(t,\cdot) - u(t,\cdot)\| \\
               &\leq & \sup_{r\geq 0}\|T_r f\| \sup_{r \geq 0}|F_t^n(r) - F_t(r)|\\
               &\to& 0,
            \end{eqnarray*}
as $n \to \infty$.
Now, due to (\ref{10_a_2}), we immediately get $u_n(t, \cdot) \in D(\mathcal{L})$ and
                \begin{equation*}
                  \frac{d}{dt} u_n(t,x) = \mathcal{L} u_n(t,x) = \int_0^\infty T_{r} \mathcal{L}f(x)\, dF_{n,t}(r).
                \end{equation*}
Moreover, since
                \begin{equation*}
                  \lim_{n \to \infty} \mathcal{L} u_n(t,x) =  \int_0^\infty T_{r} \mathcal{L}f(x)\, dF_{t}(r) = \mathcal{L} u(t,x).
                \end{equation*}
Then, by using the closed property of operator $\mathcal{L}$, we deduce that $u(t,\cdot) \in D(\mathcal{L})$ and
                \begin{equation}\label{10_add_1}
                  \mathcal{L} u(t,x) = \int_0^\infty T_{r} \mathcal{L}f(x)\, dF_{t}(r)
                  =\mE \big{(} T_{E_t^S} \mathcal{L}f(x)\big{)}.
                \end{equation}
This completes the proof of statement (\ref{10_s}).
Next, by taking a similar proof as in (\ref{10_e_2}), we also have
                \begin{eqnarray*}
                   &&  \sup_{t\geq0} \|\mathcal{L} u(t, \cdot)\| < \infty.
                \end{eqnarray*}

Finally, since $(T_t)_{t\geq 0}$ is a strongly continuous semigroup on $\mB$ with $\sup_{t\geq 0} \|T_t\| < \infty$ and $t \to E^S_t$ is continuous almost surely (see Remark \ref{10_re_1} (i)). Then, by applying the dominated convergence theorem $t \to u(t, \cdot) $ and
$t \to \mathcal{L} u(t,\cdot)$ are continuous in $(\mB, \|\cdot\|)$.

\vspace{2mm}

(ii) (Existence)
In the following, we prove that $u$  is a solution of our equation. Firstly,
by Lemma \ref{10_lemma_1}, (\ref{10_1}) and (\ref{10_9}), we have $\mP(E_s^S \leq r) =1$ for $ s \leq kr$ and
        \begin{equation*}
                  \mP(E_s^S \leq r)=
                       1 - e^{-ar}
         \Big{(}1-\int_0^r \mE \big{[}w(s-kr-\bar{D}_y)
         1_{\{s-kr > \bar{D}_y\}}\big{]}\,dy\Big{)}
        \end{equation*}
for $s> kr$.  It follows that,
        \begin{eqnarray}\label{10_3}
          && \int_0^t  \mP(E_s^S \leq r)\,ds \nonumber \\
          &=&  t\wedge(kr) + \int_{t\wedge(kr)}^t  \mP(E_s^S \leq r)\,ds \nonumber \\
           &=&  t\wedge(kr) + 1_{\{kr <t \}}   (t -kr)(1-e^{-ar}) \nonumber \\
           && + \,1_{\{kr <t \}} e^{-ar}
            \mE \int_0^r\Big{(}\int_{kr}^t w(s-kr-\bar{D}_y) 1_{\{s-kr > \bar{D}_y\}}\,ds\Big{)}\,dy  \nonumber\\
             &=&  t\wedge(kr) + 1_{\{kr <t \}}   (t -kr)(1-e^{-ar}) \nonumber\\
           && + \,1_{\{kr <t \}} e^{-ar}
            \mE \int_0^r G(t-kr-\bar{D}_y) 1_{\{t-kr > \bar{D}_y\}}\,dy.
        \end{eqnarray}
Then, by (\ref{10_add_1}), (\ref{10_3}) and Fubini's theorem, we get
        \begin{eqnarray}\label{10_5}
           && \int_0^t  \mathcal{L} u(s,x) \,ds \nonumber \\
           &=& \int_0^t \Big{(} \int_0^\infty T_r\mathcal{L}f(x)\,d_r\mP(E_s^S \leq r)  \Big{)} \,ds \nonumber \\
           &=& \int_0^\infty T_r\mathcal{L}f(x)\,d_r\Big{(} \int_0^t  \mP(E_s^S \leq r)\,ds  \Big{)} \nonumber \\
           &=& \mE \int_0^{t/k} T_r\mathcal{L}f(x)\,\Big{(} k - k (1-e^{-ar}) + a e^{-ar} (t -kr) \nonumber \\
           &&-\, a e^{-ar}
            \int_0^r G(t-kr-\bar{D}_y) 1_{\{t-kr > \bar{D}_y\}}\,dy \nonumber \\
            && +\, e^{-ar}
             G(t-kr-\bar{D}_r) 1_{\{t-kr > \bar{D}_r\}}  \nonumber \\
            && -\, k e^{-ar}
            \int_0^r w(t-kr-\bar{D}_y) 1_{\{t-kr > \bar{D}_y\}}\,dy  \Big{)} \,dr \nonumber\\
            &=:& I_1 + I_2 +I_3,
        \end{eqnarray}
where
        \begin{eqnarray*}
          I_1 &:=& \mE \int_0^{t/k} T_r\mathcal{L}f(x) e^{-ar}
             G(t-kr-\bar{D}_r) 1_{\{t-kr > \bar{D}_r\}} \,dr,
        \end{eqnarray*}
        \begin{eqnarray*}
          I_2 &:=& k  \mE\int_0^{t/k} T_r\mathcal{L}f(x) e^{-ar}\big{(} 1- \int_0^r w(t-kr-\bar{D}_y) 1_{\{t-kr > \bar{D}_y\}}\,dy \big{)} \,dr,
        \end{eqnarray*}
and
        \begin{eqnarray*}
          I_3 &:=&  \mE \int_0^{t/k} T_r\mathcal{L}f(x)\,\Big{(}  a e^{-ar} (t -kr) \\
           &&-\, a e^{-ar}
            \int_0^r G(t-kr-\bar{D}_y) 1_{\{t-kr > \bar{D}_y\}}\,dy \Big{)} dr.
        \end{eqnarray*}
For $I_1$, we have
        \begin{eqnarray*}
          I_1 &=&  \mE \int_0^{t/k} T_r\mathcal{L}f(x) e^{-ar}
             G(t-kr-\bar{D}_r) 1_{\{t-kr > \bar{D}_r\}} \,dr \nonumber\\
             &=&  \int_0^{t/k} T_r\mathcal{L}f(x) e^{-ar}\mE[
             G(t-D_r) 1_{\{t> D_r\}}] \,dr \nonumber\\
             &=&  \int_0^{\infty} T_r\mathcal{L}f(x) e^{-ar}\mE[
             G(t-D_r) 1_{\{t> D_r\}}] \,dr.
        \end{eqnarray*}
It follows that, by using (\ref{10_0}), we get
        \begin{equation}\label{10_6}
          I_1 = \int_0^t w(t-s) \big{(} u(s,x) - u(0,x) \big{)}\,ds.
        \end{equation}
For $I_2$, by using the Fubini's theorem and the integration by parts formula, we get
        \begin{eqnarray}\label{10_7}
          I_2 &=& k  \int_0^{t/k} T_r\mathcal{L}f(x) e^{-ar}\big{(} 1- \mE\int_0^r w(t-kr-\bar{D}_y) 1_{\{t-kr > \bar{D}_y\}}\,dy \big{)} \,dr \nonumber  \\
          &=& k  \int_0^{t/k} T_r\mathcal{L}f(x) \big{(} 1- \mP(E_t \leq r) \big{)} \,dr \nonumber \\
          &=& k  \int_0^{\infty}  \big{(} 1- \mP(E_t \leq r) \big{)} \,dr(T_rf(x) -f(x))
          \nonumber \\
          &=& k  \int_0^{\infty} (T_rf(x) -f(x))\,  d_r \mP(E_t \leq r) \nonumber \\
          &=& k (u(t,x) - u(0,x))
        \end{eqnarray}
Next, since
        \begin{eqnarray*}
           &&  \int_0^t u(s,x)\,ds \\
           &=& \int_0^t\Big{(}\int_0^\infty T_rf(x)\,d_r\mP(E_s \leq r)\Big{)}\,ds \\
           &=& \int_0^\infty T_rf(x)\,d_r\Big{(}\int_0^t\mP(E_s \leq r)\,ds\Big{)} \\
           &=& \int_0^{\infty} T_rf(x)\,d_r\Big{(}\int_0^t\mP(E_s \leq r)\,ds - t\Big{)} \\
           &=&  f(x) t - \int_0^\infty \big{(}\int_0^t\mP(E_s \leq r)\,ds - t\big{)}\,d_r( T_rf(x)) \\
           &=&  f(x) t - \int_0^\infty T_r\mathcal{L}f(x)\big{(}\int_0^t\mP(E_s \leq r)\,ds - t\big{)}\,dr,
        \end{eqnarray*}
where we have used the integration by parts formula in the third equality. Furthermore, by (\ref{10_3}), we obtain
        \begin{eqnarray*}
           &&  \int_0^t u(s,x)\,ds  \nonumber \\
           &=&  f(x) t - \int_0^\infty T_r\mathcal{L}f(x)\big{(}\int_0^t\mP(E_s \leq r)\,ds - t\big{)}\,d_r \nonumber \\
           &=&  f(x) t - \int_0^{t/k}T_r\mathcal{L}f(x)\big{(}\int_0^t\mP(E_s \leq r)\,ds - t\big{)}\,d_r \nonumber \\
           &=&  f(x) t + \mE \int_0^{t/k}T_r\mathcal{L}f(x)e^{-ar}\Big{(} (t -kr) \nonumber\\
           && - \,
             \int_0^r G(t-kr-\bar{D}_y) 1_{\{t-kr > \bar{D}_y\}}\,dy\Big{)}\,d_r \nonumber \\
           &=&  f(x) t + I_3/a.
        \end{eqnarray*}
Therefore, we get
        \begin{equation}\label{10_8}
          I_3 =  a \big{(}\int_0^t u(s,x)\,ds - f(x) t \big{)}.
        \end{equation}
Now, combining (\ref{10_5})-(\ref{10_8}) together, for every $t>0$, we obtain
        \begin{eqnarray*}
           \int_0^t  \mathcal{L} u(s,x) \,ds &=&
           \int_0^t w(t-s) \big{(} u(s,x) - u(0,x) \big{)}\,ds \\
           && + \, k (u(t,x) - u(0,x))  + a \big{(}\int_0^t u(s,x)\,ds - f(x) t \big{)},
        \end{eqnarray*}
which implies that
        \begin{equation*}
            (k \partial_t + \partial_t^w) u(t,x) = (\mathcal{L} -a) u(t,x) + a f(x)
        \end{equation*}
by using the continuity of $t \to u(t, \cdot)$ and  $t \to \mathcal{L}u(t,\cdot)$  in $(\mB, \|\cdot\|)$.

\vspace{2mm}
(iii) (Uniqueness) Suppose that $\tilde{u}(t,x)$ is another solution of equation (\ref{10_equation_1})  with  $\tilde{u}(0,x) =f(x)$ in the sense of Theorem \ref{10_th_1}. It follows that
$v(t,x):= \tilde{u}(t,x) - u(t,x)$ is a solution of the following homogenous equation
            \begin{equation*}
                 \left\{
                     \begin{array}{ll}
                       (k \partial_t + \partial_t^w) v(t,x) = (\mathcal{L} -a) v(t,x), &  t>0, \\
                       v(0,x) = 0.
                     \end{array}
                   \right.
                \end{equation*}
Hence, for every $t>0$, we have
            \begin{equation}\label{10_11}
              k\, v(t,x) +  \int_0^t w(t-s)v(s,x)\,ds = \int_0^t (\mathcal{L} -a) v(s,x)\,ds.
            \end{equation}
Next, let $V(\l,x):= \int_0^\infty e^{-\l t} v(t,x)\,dt$ be the Laplace transform of $t \to v(t,x)$.
It is easy to see that $V(\l, \cdot) \in \mB$ for every $\l>0$ and
            \begin{equation*}
              \|V(\l, \cdot)\| \leq  \frac{1}{\l} \sup_{t\geq 0} \|v(t,\cdot)\|.
            \end{equation*}
In addition, by taking a similar procedure as in the proof of (\ref{10_s}), we have, for every $\l >0$,  $V(\l,\cdot) \in D(\mathcal{L})$,
            \begin{equation*}
              \mathcal{L} V(\l, \cdot) = \int_0^\infty e^{-\l t} \mathcal{L}v(t,\cdot)\,dt
            \end{equation*}
and
            \begin{equation*}
              \|\mathcal{L} V(\l, \cdot)\| \leq \int_0^\infty e^{-\l t} \|\mathcal{L}v(t,\cdot)\|\,dt
              \leq \frac{1}{\l} \sup_{t \geq 0}\|\mathcal{L}v(t,\cdot)\|.
            \end{equation*}
Now, by taking Laplace transform on both sides of (\ref{10_11}), we get
            \begin{equation}
             V(\l,x) \Big{(} k+  \int_0^\infty e^{-\l s} w(s)\,ds \Big{)} = \frac{1}{\l}(\mathcal{L} -a ) V(\l,x).
            \end{equation}
It follows that
            \begin{equation*}
              \mathcal{L}  V(\l,x) = V(\l,x) \Big{(} \big{(}k  +  \int_0^\infty e^{-\l s} w(s)\,ds \big{)}\l + a\Big{)} = \phi(\l) V(\l,x) .
            \end{equation*}
That is, we have obtained
            \begin{equation*}
             \big{(} \phi(\l) - \mathcal{L} \big{)}\, V(\l,x) =0.
            \end{equation*}
Next, recall that, for every $\l>0$, the resolvent $(\l - \mathcal{L})^{-1}$ exists (see, e.g., p.159 in \cite{Applebaum}). Hence, $V(\l, \cdot) = 0$ for every $\l >0$. Finally, according to the uniqueness of the Laplace transform, we obtain $v(t, \cdot) = 0$ in $\mB$ for every $t>0$. Therefore, $\tilde{u}(t,\cdot) = u(t, \cdot)$ in $\mB$ for every $t\geq 0$. We finish the proof.

\section{An example}

In this section, we present an example to explain our result. Let $B=(B_t)_{t\geq 0}$ be a $\mR^d$-valued standard Brownian motion. Let $N(dt,dx)$ be a Poisson random measure on
 $(\mR^{+} \times (\mR^d- \{0\}) )$ with intensity measure $\nu(dx)$ and denoted  its compensator by $\tilde{N}(dt,dx):= N(dt,dx) - \nu(dx)\,dt$.
We consider the $\mR^d$-valued process Markov $Y=(Y_t)_{t\geq 0}$  which is the unique strong solution of the following equation
                \begin{eqnarray*}
                  dY_t &=& b(Y_{s-})\,ds + \sigma(Y_{s-})\,dB_s + \int_{\|x\|< 1} F(Y_{s-},x)\tilde{N}(ds,dx) \\
                    &&    +\, \int_{\|x\|\geq1} G(Y_{s-},x)N(ds,dx),
                \end{eqnarray*}
where $b: \mR^d \to \mR^d$, $\sigma: \mR^d \to \mR^{d\times d}$, $F: \mR^d \times \mR^d \to \mR^d$  and $G: \mR^d \times \mR^d \to \mR^d$ are measurable functions satisfying the classical Lipschitz and linear growth conditions (see, e.g., p. 365 in \cite{Applebaum}). For $f \in C_0(\mR^d)$, let
                \begin{equation*}
                  T_t f(y):= \mE (f(Y_t)|Y_0 = y).
                \end{equation*}
Then $(T_t)_{t\geq}$ is the semigroup in the Banach space $C_0(\mR^d)$. Its
generator is
                \begin{eqnarray*}
                  \mathcal{A} f &=& b^i(y) (\partial_i f)(y) +
                               \frac{1}{2} a^{ij}(y) (\partial_i\partial_j f)(y)\\
                               &&  +\, \int_{|x|<1} \big{(} f( F(y,x) + y) -f(y)
                               - F^{i}(y,x)(\partial_i f)(y) \big{)}\,\nu(dx)\\
                               && + \, \int_{|x|\geq 1} \big{(}
                               f(G(y,x)+y) -f(y) \big{)} \,\nu(dx).
                \end{eqnarray*}
with $C_0^2(\mR^d) \subset D(\mathcal{A})$.

According to Theorem \ref{10_th_1}, we immediately have the following corollary.

\begin{corollary}
Under the conditions of Theorem \ref{10_th_1}.
For every $f \in C_0^2(\mR^d)$,  the function
                \begin{equation*}
                  u(t,y): = \mE \big{(} f(Y_{E^S_t})|Y_0 =y\big{)}, \quad t\geq 0,
                \end{equation*}
is the unique solution of the  equation
                \begin{equation*}
                 \left\{
                     \begin{array}{ll}
                       (k \partial_t + \partial_t^w) u(t,y) = (\mathcal{A} -a) u(t,y) +  a f(y), &  t>0, \\
                       u(0,y) = f(y),
                     \end{array}
                   \right.
                \end{equation*}
in the strong sense (i.e., in the sense of (i)-(iv) as in Theorem \ref{10_th_1}).
\end{corollary}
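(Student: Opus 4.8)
The plan is to read this Corollary as the specialization of Theorem \ref{10_th_1} obtained by taking $\mX=\mR^d$, the Banach space $(\mB,\|\cdot\|)=(C_0(\mR^d),\|\cdot\|_\infty)$, the strong Markov process $X=Y$ with transition semigroup $(T_t)_{t\geq0}$, and the generator $\mathcal{L}=\mathcal{A}$. The conditions imposed on the Bernstein function $\phi$ (that $a>0$ and $\mu((0,\infty))=\infty$, together with $w(z)=\mu((z,\infty))$) are inherited verbatim from ``the conditions of Theorem \ref{10_th_1}'' and involve only $\phi$, not the process $Y$. Hence the entire content of the proof is to check that this concrete triple $(Y,(T_t)_{t\geq0},\mathcal{A})$ satisfies the abstract standing assumptions of Section 1, after which Theorem \ref{10_th_1} applies word for word.

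First I would record that, under the stated global Lipschitz and linear growth conditions on $b,\sigma,F,G$, the displayed SDE has a unique strong solution $Y$, which is a conservative Feller process on $\mR^d$ (see p.365 in \cite{Applebaum}); in particular $Y$ is strong Markov, and $\mR^d$ is a separable locally compact Hausdorff space, so the structural hypotheses on $X$ hold. Next I would verify the analytic requirements on $(T_t)_{t\geq0}$: since $Y$ is Feller, $T_t f(y)=\mE(f(Y_t)|Y_0=y)$ maps $C_0(\mR^d)$ into itself and defines a strongly continuous contraction semigroup, so it is uniformly bounded with bound $M=1$ and strongly continuous in $(\mB,\|\cdot\|)$. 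Its infinitesimal generator is the integro-differential operator $\mathcal{A}$ displayed above, and applying It\^{o}'s formula to $f(Y_t)$ for $f\in C_0^2(\mR^d)$ shows $C_0^2(\mR^d)\subset D(\mathcal{A})$ with $\mathcal{A}$ acting as stated.

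With these facts granted, the conclusion is immediate. For $f\in C_0^2(\mR^d)\subset D(\mathcal{A})=D(\mathcal{L})$, Theorem \ref{10_th_1} identifies
        \begin{equation*}
          u(t,y)=\mE\big(T_{E_t^S}f(y)\big)=\mE\big(f(Y_{E_t^S})\,|\,Y_0=y\big)
        \end{equation*}
as the unique solution of the stated Cauchy problem in the sense (i)--(iv), with $\mathcal{L}$ replaced by $\mathcal{A}$. The main (and essentially only) obstacle is the second step, namely confirming that $(T_t)_{t\geq0}$ is a genuine strongly continuous Feller semigroup on $C_0(\mR^d)$ whose generator contains $C_0^2(\mR^d)$ in its domain; these are standard but nontrivial facts from the theory of L\'{e}vy-type SDEs. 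Once they are in place, the delicate pointwise limit in clause (iv) and the resolvent/Laplace-transform uniqueness argument are furnished wholesale by Theorem \ref{10_th_1}, and no further computation is required.
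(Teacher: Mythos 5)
Your proposal is correct and follows essentially the same route as the paper: the paper's proof of this corollary is simply an immediate specialization of Theorem \ref{10_th_1} to $\mB = C_0(\mR^d)$, $\mathcal{L} = \mathcal{A}$, $X = Y$, relying (with a citation to \cite{Applebaum}) on the standard facts that the SDE solution is a Feller process whose semigroup on $C_0(\mR^d)$ is strongly continuous with $C_0^2(\mR^d) \subset D(\mathcal{A})$. Your write-up just makes explicit the verification of these standing hypotheses that the paper leaves implicit.
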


\begin{flushright}
$\Box$
\end{flushright}

{\bf Acknowledgements} {This work was supported in part by
the National Natural Science Foundation
 of China(Grant No.11401029) and by Teacher
 Research Capacity Promotion of Beijing Normal University Zhuhai.}


\ \newline


\begin{thebibliography}{999}

\bibitem{Applebaum} Applebaum, D. (2009). L\'{e}vy Processes and Stochastic Calculus (2nd).Cambridge University Press.

\bibitem{Baeumer} Baeumer B. and Meerschaert MM. (2001). Stochastic solutions for fractional Cauchy problems. Fract Calc App Anal, 2001, 4, 481-500.

\bibitem{Chen}    Chen, Z. (2017). Time fractional equations and probabilistic representation. Chaos Solitons and Fractals,, 168-174.

\bibitem{Hahn}    Hahn, M., Kobayashi, K.  and  Umarov, S. (2012). SDEs driven by a time-changed L\'{e}vy process and their associated time-fractional order pseudo-differential equations. Journal of Theoretical Probability, 25(1), 262-279.

\bibitem{Hahn3}
    Hahn, M. and Umarov, S. (2011). Fractional Fokker-Planck-Kolmogorov type equations and their associated stochastic differential equations. Fractional Calculus and Applied Analysis, 14(1), 56-79.

\bibitem{Kobayashi}    Kobayashi, K. (2011). Stochastic calculus for a time-changed semimartingale and the associated stochastic differential equations. Journal of Theoretical Probability, 24(3), 789-820.

\bibitem{Meerschaert}   Meerschaert, M. M. and  Straka, P. (2013). Inverse stable subordinators. Mathematical Modelling of Natural Phenomena, 8(2), 1-16.

\bibitem{Meerschaert1} Meerschaert, M. M. and Scheffler, H. P.  (2006). Stochastic model for ultraslow diffusion. Stochastic Processes and Their Applications, 116(9), 1215-1235.

\bibitem{Meerschaert2} Meerschaert, M. M. , Nane, E. and Vellaisamy, P.  (2011). Distributed-order fractional diffusions on bounded domains. Journal of Mathematical Analysis and Applications, 379(1), 216-228.





\end{thebibliography}
\end{document}